\theoremstyle{plain}
\newtheorem{theorem}{Theorem}
\newtheorem{lemma}{Lemma}
\newtheorem*{theo*}{Theorem}
\newtheorem{corollary}{Corollary}
\theoremstyle{definition}
\newtheorem*{definition*}{Definition}
\newtheorem{remark}{Remark}
\begin{document}
\sloppy
\title[On  nilpotent Lie algebras of derivations of fraction fields]
{On nilpotent   Lie algebras \\ of   derivations of fraction fields}
\author
{ A.P.  Petravchuk}
\address{Anatoliy P. Petravchuk:
Department of Algebra and Mathematical Logic, Faculty of Mechanics and Mathematics, Kyiv
Taras Shevchenko University, 64, Volodymyrska street, 01033  Kyiv, Ukraine}
\email{aptr@univ.kiev.ua , apetrav@gmail.com}
\date{\today}
\keywords{Lie algebra, vector field,  nilpotent algebra, derivation}
\subjclass[2000]{Primary 17B66; Secondary 17B05, 13N15}

%
\begin{abstract}
Let $\mathbb K$ be an arbitrary  field of characteristic zero and $A$  an integral $\mathbb K$-domain. Denote by $R$ the fraction field of $A$ and by $W(A)=RDer_{\mathbb K}A,$ the Lie algebra of  $\mathbb K$-derivations on $R$ obtained from $Der_{\mathbb K}A$ via multiplication by elements of $R.$  If $L\subseteq W(A)$ is a subalgebra of $W(A)$ denote by $rk_{R}L$ the dimension of the vector space $RL$ over the field  $R$ and by $F=R^{L}$ the field of constants of $L$ in $R.$  Let $L$ be a  nilpotent subalgebra $L\subseteq W(A)$ with  $rk_{R}L\leq 3$. It is proven that the Lie algebra  $FL$ (as a Lie algebra over the field $F$) is isomorphic to a finite dimensional  subalgebra of the triangular  Lie subalgebra $u_{3}(F)$ of the Lie algebra  $Der F[x_{1}, x_{2}, x_{3}], $ where  $u_{3}(F)=\{f(x_{2}, x_{3})\frac{\partial}{\partial x_{1}}+g(x_{3})\frac{\partial}{\partial x_{2}}+c\frac{\partial}{\partial x_{3}}\}$ with $f\in F[x_{2}, x_{3}], g\in F[x_3]$, $c\in F.$ In particular, a characterization of nilpotent Lie algebras of vector fields with polynomial coefficients in three variables is obtained.
 \end{abstract}
\maketitle


\section{Introduction}
Let $\mathbb{K}$ be an arbitrary   field of characteristic zero and $A$  an associative commutative $\mathbb K$-algebra that is a domain.
The set  ${\rm Der} _{\mathbb K}A$  of all $\mathbb K$-derivations of $A$   is a Lie algebra over $\mathbb K$  and an $A$-module in a natural way:  given $a\in A, D\in {\rm Der} _{\mathbb K}A,$   the derivation $aD$ sends any element $x\in A$ to $a\cdot D(x).$
The structure of the Lie algebra ${\rm Der} _{\mathbb K} A$ is of great interest because in case $\mathbb K=\mathbb R$ and $A=\mathbb R[[x_{1}, \ldots , x_{n}]],$ the   ring of formal power series, the Lie algebra of all $\mathbb K$-derivations of the form
$$D=f_{1}\frac{\partial}{\partial x_{1}}+\cdots +f_{n}\frac{\partial}{\partial x_{n}}, \ f_{i}\in \mathbb R[[x_{1}, \ldots , x_{n}]]$$ can be considered as the Lie algebra of vector fields on $\mathbb R^{n}$ with formal power series coefficients.
Such Lie algebras   with polynomial, formal power series, or analytical coefficients were  studied by many authors. Main results for fields $\mathbb K=\mathbb C$ and $\mathbb K=\mathbb R$ in case  $n=1$ and $n=2$ were obtained in \cite{Lie} \cite{Olver},   \cite{Olver1} (see also \cite{Bavula1}, \cite{Draisma},  \cite{PBNL}, \cite{Post}).

One of the important problems in Lie theory is to describe finite dimensional subalgebras of the Lie algebra $\overline{W_{3}}(\mathbb C)$ consisting of all derivations on the ring $\mathbb C[[x_{1}, x_{2}, x_{3}]]$ of the form $$a_{1}\frac{\partial}{\partial x_{1}}+a_{2}\frac{\partial}{\partial x_{2}}+a_{3}\frac{\partial}{\partial x_{3}}, a_{i}\in \mathbb C[[x_{1}, \ldots , x_{n}]].$$  In order to characterize  nilpotent subalgebras of the Lie algebra $\overline{W_{3}}(\mathbb C)$ we consider more general situation. Let $R={\rm Frac}(A) $ be the field of fractions of an integral domain $A$ and $W(A)=R{\rm Der}_{\mathbb K}(A)$ the Lie algebra of derivations of the field $R$ obtained from derivations on $A$ by multiplying by elements of the field  $R$ (obviously ${\rm Der} _{\mathbb K}A\subseteq W(A)$). For a subalgebra $L$ of the Lie algebra $W(A)$ let us define ${\rm rk}_{R}(L)=\dim _{R}RL$  and denote by $F=R^{L}=\{ r\in R \ | \ D(r)=0,\forall D\in L\}$   the field of constants of the Lie algebra $L.$ The $\mathbb K$-space $FL$ is  a vector space over the field $F$ and   a Lie algebra  over  $F.$ If  $L$ is a nilpotent subalgebra of $W(A),$ then $FL$ is finite dimensional over $F$ (by Lemma \ref{nilfewdim}).

The main result of the paper: If $L$ is a nilpotent subalgebra of rank $k\leq 3$ over $R$ from the Lie algebra $W(A),$
then $FL$ is isomorphic to a finite dimensional subalgebra of the triangular Lie algebra $u_{k}(F)$ (Theorem \ref{th2}). Triangular Lie algebras were studied in \cite{Bavula1} and \cite{Bavula2}, they are locally nilpotent but not nilpotent, the structure of their  ideals was described in these papers.

We use standard notation, the ground field is arbitrary of characteristic zero. The quotient field of the integral domain $A$ under consideration is denoted by $R.$ Any derivation $D$ of $A$ can be uniquely extended  to a derivation of $R$ by the rule: $D(a/b)=(D(a)b-aD(b))/b^{2}.$
If $F$ is a subfield of the field $R$ and $r_{1}, \ldots , r_{k}\in R, $ then the set of all linear combinations of these elements with coefficients in $F$ is denoted by $F\langle r_{1}, \ldots , r_{k}\rangle , $ it is a  subspace of the $F$-space $R.$
The triangular subalgebra $u{_n}(\mathbb K)$ of the Lie algebra $W_{n}(\mathbb K)={\rm Der} (\mathbb K[x_1, \ldots , x_n])$  consists of all the derivations on the ring $\mathbb K[x_1, \ldots , x_n]$ of the form $D=f_{1}(x_2, \ldots x_n)\frac{\partial}{\partial x_1}+\cdots +f_{n-1}(x_{n})\frac{\partial}{\partial x_{n-1}}+f_{n}\frac{\partial}{\partial x_n},$  where $f_{i}\in \mathbb K[x_{i+1}, \ldots x_n], f_n\in \mathbb K.$

\section{Some properties of nilpotent subalgebras of $W(A)$}
We will use  some statements about derivations and nilpotent Lie algebras of derivations from  the paper  \cite{MP1}. The next statement can be immediately checked.
\begin{lemma}\label{commutators}
Let $D_{1}, D_{2}\in W(A)$  and $a, b\in R.$ Then it holds:

{\rm 1.}  $[aD_{1}, bD_{2}]=ab[ D_1, D_2]+aD_1(b)D_2-bD_2(a)D_{1}.$

{\rm 2.} If $a, b\in R^{D_{1}}\cap R^{D_{2}},$  then  $[aD_{1}, bD_{2}]=ab[ D_1, D_2].$
\end{lemma}

Let $L$ be a subalgebra of rank $k$ over $R$ of the Lie algebra $W(A)$
and $F=R^{L}$ its field of constants. Denote by $RL$ the set of all linear combinations over $\mathbb K$ of elements $aD,$ where $a\in R$ and $D\in L.$  The set $FL$  is defined analogously.

\begin{lemma}(\cite{MP1}, Lemma 2).\label{MP-2}
Let $L$ be a nonzero subalgebra of  $W(A)$ and let $FL$, $RL$ be  $\mathbb{K}$-spaces  defined as above.
Then:

{\rm 1.}  $FL$ and $RL$ are $\mathbb{K}$-subalgebras of the Lie algebra $W(A)$.
Moreover,  $FL$ is a Lie algebra over the field $F.$

{\rm 2.} If the algebra $L$ is  abelian,  nilpotent, or  solvable  then the Lie algebra $FL$ has the same property, respectively.
\end{lemma}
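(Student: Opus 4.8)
The plan is to reduce both parts to the explicit commutator formulas of Lemma~\ref{commutators}, exploiting the fact that every element of $F=R^{L}$ is a constant for \emph{all} derivations in $L$. For Part~1 I would first treat $RL$. Since $RL$ is by construction a $\mathbb{K}$-subspace (indeed an $R$-submodule) of $W(A)$, it remains only to check closure under the bracket, and by bilinearity it suffices to bracket two generators $aD_{1}$ and $bD_{2}$ with $a,b\in R$ and $D_{1},D_{2}\in L$. Lemma~\ref{commutators}(1) gives
\[
[aD_{1},bD_{2}]=ab[D_{1},D_{2}]+aD_{1}(b)\,D_{2}-bD_{2}(a)\,D_{1},
\]
and each summand lies in $RL$: the first because $[D_{1},D_{2}]\in L$ (as $L$ is a subalgebra) and $ab\in R$, the other two because $aD_{1}(b),bD_{2}(a)\in R$. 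Hence $RL$ is a $\mathbb{K}$-subalgebra.

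For $FL$ I would run the same computation but with $a,b\in F=R^{L}$; now $D_{1}(a)=D_{2}(a)=D_{1}(b)=D_{2}(b)=0$, so Lemma~\ref{commutators}(2) applies and collapses the formula to $[aD_{1},bD_{2}]=ab[D_{1},D_{2}]$. Since $ab\in F$ (as $F$ is a field) and $[D_{1},D_{2}]\in L$, the bracket lies in $FL$, giving closure. Reading the same identity as $[\alpha a D_{1},bD_{2}]=\alpha\,(ab[D_{1},D_{2}])$ for $\alpha\in F$ shows the bracket is $F$-bilinear, so $FL$ is genuinely a Lie algebra over $F$.

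For Part~2 the abelian case is immediate from the collapsed formula, since $[L,L]=0$ forces $[FL,FL]=0$. For nilpotency and solvability I would prove by induction the identities
\[
\gamma_{i}(FL)=F\,\gamma_{i}(L), \qquad (FL)^{(i)}=F\,L^{(i)},
\]
relating the lower central and derived series of $FL$ to those of $L$; the base case is $\gamma_{1}(FL)=FL=F\gamma_{1}(L)$. For the inductive step, a spanning set of $[FL,F\gamma_{i}(L)]$ consists of brackets $[aD,bE]$ with $a,b\in F$, $D\in L$, $E\in\gamma_{i}(L)\subseteq L$; since $D,E\in L$, the scalars $a,b$ are annihilated by each, so Lemma~\ref{commutators}(2) yields $[aD,bE]=ab[D,E]\in F\gamma_{i+1}(L)$, and the reverse inclusion follows by writing $c[D,E]=[cD,E]$ for $c\in F$. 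The derived series is handled identically. Consequently $\gamma_{n+1}(L)=0$ gives $\gamma_{n+1}(FL)=0$ and $L^{(n)}=0$ gives $(FL)^{(n)}=0$, so $FL$ inherits nilpotency and solvability with the same class and length.

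The step I expect to require the most care is verifying that the hypothesis of Lemma~\ref{commutators}(2) holds at \emph{every} stage of the induction, since that hypothesis demands the scalars be constants for \emph{both} derivations being bracketed. This is precisely why it matters that each term of the lower central and derived series stays inside $L$, so that $F=R^{L}$ continues to annihilate it; were one to bracket against derivations outside $L$, the Leibniz terms would reappear and the clean identities $\gamma_{i}(FL)=F\gamma_{i}(L)$ would break down. A secondary point worth recording is that $F$ is indeed a field, closed under inversion since $D(1/r)=-D(r)/r^{2}=0$ for $r\in F$, which is what legitimizes treating $FL$ as an $F$-vector space and the product $ab$ as an element of $F$.
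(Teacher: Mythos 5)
Your proof is correct. The paper itself gives no proof of this lemma---it is quoted from \cite{MP1}---but your argument via Lemma~\ref{commutators} is exactly the expected one, and the inductive identities $\gamma_{i}(FL)=F\gamma_{i}(L)$ and $(FL)^{(i)}=FL^{(i)}$ (which rest on the fact that every term of these series stays inside $L$, so that $F=R^{L}$ keeps annihilating the coefficients) all check out.
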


\begin{lemma}(\cite{MP1}, Lemma 3).\label{MP-3}
Let $L$ be a subalgebra of finite rank over $R$  of the Lie algebra $W(A)$, $Z=Z(L)$  the center of $L,$
and $F=R^{L}$  the field of constants of $L$. Then ${\rm rk}_{R} Z = \dim _{F}FZ$
and $FZ$ is a subalgebra of the center $Z(FL)$. In particular, if
$L$ is abelian, then $FL$ is an abelian subalgebra  of  $W(A)$ and ${\rm rk}_{R}L=\dim _{F}FL.$

\end{lemma}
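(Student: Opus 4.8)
The plan is to prove the three assertions in the order (2), (1), (3), since the centrality statement (2) underlies the rank computation, and (3) is merely a specialization.

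First I would establish (2). A typical generator of $FL$ has the form $gD$ with $g\in F$, $D\in L$, and a typical generator of $FZ$ has the form $fz$ with $f\in F$, $z\in Z$. Because $F=R^{L}$ is the field of constants of all of $L$, both $f$ and $g$ are annihilated by $z$ and by $D$, i.e. $f,g\in R^{z}\cap R^{D}$. Hence Lemma~\ref{commutators}(2) applies and gives $[fz,gD]=fg[z,D]=0$, since $z\in Z(L)$ commutes with $D$. By bilinearity this shows $FZ$ commutes with $FL$, so $FZ\subseteq Z(FL)$; replacing $gD$ by another generator $f'z'$ of $FZ$ gives $[fz,f'z']=ff'[z,z']=0$, so $FZ$ is in fact an abelian subalgebra.

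The substantive part is (1), the equality ${\rm rk}_{R}Z=\dim_{F}FZ$. One inequality is free: since $F\subseteq R$, every $R$-linearly independent subset of $Z$ is $F$-linearly independent, so ${\rm rk}_{R}Z=\dim_{R}RZ\le\dim_{F}FZ$. For the reverse I would show that an $F$-independent family $z_{1},\dots,z_{m}\in Z$ is already $R$-independent. Suppose not; choose a nontrivial relation $\sum r_{i}z_{i}=0$ over $R$ with the fewest nonzero coefficients, and rescale so that $r_{1}=1$. The key move is to apply $\ad D$ for an arbitrary $D\in L$: from $[D,r_{i}z_{i}]=D(r_{i})z_{i}+r_{i}[D,z_{i}]$ and the centrality $[D,z_{i}]=0$ one obtains $\sum_{i}D(r_{i})z_{i}=0$. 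Since $D(r_{1})=D(1)=0$, this is a shorter relation, so by minimality $D(r_{i})=0$ for all $i$. As $D$ ranges over $L$, each $r_{i}$ lies in $R^{L}=F$, yielding a nontrivial $F$-relation---a contradiction. Hence $\dim_{F}FZ\le\dim_{R}RZ$, and equality holds.

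Finally (3) is a specialization: when $L$ is abelian, $Z=L$, so (1) gives ${\rm rk}_{R}L=\dim_{F}FL$ and (2) gives $FL=FZ\subseteq Z(FL)$, i.e. $[FL,FL]=0$, so $FL$ is abelian. The hard part is the minimal-relation argument in (1); its whole force comes from the interplay of two facts---the centrality $[D,z_{i}]=0$, which annihilates the cross terms when $\ad D$ is applied, and the definition $F=R^{L}$, which is exactly what promotes the surviving coefficients to constants. Everything else reduces to the bracket identities of Lemma~\ref{commutators}.
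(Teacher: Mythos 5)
Your proof is correct, and it follows the same route as the source: the paper itself states this lemma without proof (citing Lemma~3 of \cite{MP1}), and the argument there is exactly your combination of Lemma~\ref{commutators} for the centrality of $FZ$ and the minimal-relation (shortest nontrivial dependence) trick, applying $\ad D$ to force the coefficients into $R^{L}=F$. No gaps; part (3) is indeed just the specialization $Z=L$.
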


\begin{lemma} (\cite{MP1}, Lemma 4).\label{MP-4}
Let $L$ be a  subalgebra of  the Lie algebra $W(A)$ and  $I$ be an ideal of $L$. Then
the vector  space $RI\cap L$ (over $\mathbb K$)  is also  an ideal  of $L.$
\end{lemma}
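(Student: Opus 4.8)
The plan is to verify the ideal property directly from the definition, using the explicit commutator formula of Lemma \ref{commutators}. Since $RI$ and $L$ are both $\mathbb{K}$-subspaces of $W(A)$, their intersection $RI\cap L$ is automatically a $\mathbb{K}$-subspace, so the only thing to check is that $[x,y]\in RI\cap L$ whenever $x\in L$ and $y\in RI\cap L$. One half is immediate: since $x,y\in L$ and $L$ is a subalgebra, $[x,y]\in L$. The whole content of the lemma is therefore to show that $[x,y]\in RI$ as well.

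First I would write $y$ explicitly. As an element of $RI$, it has the form $y=\sum_{i} a_i D_i$ with $a_i\in R$ and $D_i\in I$, and by bilinearity of the bracket $[x,y]=\sum_i [x,a_i D_i]$. Next I would apply part 1 of Lemma \ref{commutators}, treating $x$ as a derivation with coefficient $1$, to expand each summand as
$$[x,a_i D_i]=a_i[x,D_i]+x(a_i)D_i .$$
This is the key computation, and it is exactly what makes the statement work: the ``correction term'' $x(a_i)D_i$ that appears when the coefficient $a_i$ is not a constant still has the shape (element of $R$) times (derivation in $I$).

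To finish, I would check that both terms lie in $RI$. Since $I$ is an ideal of $L$, $x\in L$, and $D_i\in I$, we have $[x,D_i]\in I$, whence $a_i[x,D_i]\in RI$; and since $x$ is a derivation of $R$ we have $x(a_i)\in R$, so $x(a_i)D_i\in RI$ as well. Summing over $i$ gives $[x,y]\in RI$, and together with $[x,y]\in L$ this yields $[x,y]\in RI\cap L$, proving that $RI\cap L$ is an ideal of $L$.

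I do not expect a genuine obstacle here; the argument is a short direct verification. The only point requiring attention is the appearance of the non-tensorial term $x(a_i)D_i$ in the commutator formula: one must observe that the ideal property is applied to $[x,D_i]$ rather than to $[x,a_iD_i]$, and that $x(a_i)$ remains an element of $R$ so that the extra term does not leave $RI$. Both facts follow at once from Lemma \ref{commutators} and the definition of $RI$.
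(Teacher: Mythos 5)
Your proof is correct and is the standard direct verification: the paper itself only cites this statement from \cite{MP1} without reproducing an argument, and your computation $[x,a_iD_i]=a_i[x,D_i]+x(a_i)D_i$ (Lemma \ref{commutators} with coefficient $1$ on $x$) is exactly the key step, with both resulting terms visibly in $RI$. Nothing is missing.
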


\begin{lemma}(\cite{MP1}, Proposition 1, Theorem 1). \label{nilfewdim}
Let $L$ be a nilpotent subalgebra of $W(A)$ and $F=R^{L}$ be its field of
constants. Then:

{\rm 1.}  If ${\rm rk}_{R}L<\infty ,$  then $\dim_{F}FL<\infty .$

{\rm 2.}  If ${\rm rk}_{R}L=1,$ then $L$ is abelian and $\dim_{F}FL=1.$

{\rm 3.}  If ${\rm rk}_{R}L=2,$ then there exist elements $D_1, D_2 \in FL$ and $a \in R$ such that
 $$FL=F\langle D_1, aD_1, \ldots , \frac{a^k}{k!} D_{1}, D_{2}\rangle , \  k\geq 0  \ (\mbox{if}$$
 $$ \ k=0, \ \mbox{then put} \ FL=F\langle D_{1}, D_{2}\rangle ), $$
where $[D_1, D_2]=0$, $D_1(a)=0$, $D_2(a)=1.$
\end{lemma}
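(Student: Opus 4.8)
The plan is to prove the three parts in order, using each as a stepping stone to the next, with part 1 (finite dimensionality) as the foundation. Throughout I work with $FL$, which is a nilpotent Lie algebra over $F$ by Lemma \ref{MP-2}, and I use repeatedly that nilpotency forces a nonzero center together with the rank/dimension identity $\mathrm{rk}_R Z = \dim_F FZ$ of Lemma \ref{MP-3}.

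For part 1 I would argue by induction on $k = \mathrm{rk}_R L$, the case $k = 0$ being trivial. For the inductive step, nilpotency yields a nonzero central derivation $D_0 \in Z(L)$, and by Lemma \ref{MP-3} the ideal $Z = Z(L)$ already satisfies $\dim_F FZ = \mathrm{rk}_R Z < \infty$; so the real task is to control $FL$ modulo the center. The natural move is to pass to the quotient by the ideal $J = RZ \cap L$ (an ideal by Lemma \ref{MP-4}), which has strictly smaller rank, and to invoke the inductive hypothesis. The obstruction here, which I expect to be the hardest point of the whole statement, is that $L/J$ is not a priori a subalgebra of any $W(A')$, so one cannot induct naively. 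Instead one must use the central derivation $D_0$ to build a coordinate $a \in R$ with $D_0(a)$ constant, and show that the coefficients produced along the lower central series are polynomials of bounded degree in $a$. The $\mathrm{ad}$-nilpotence of every element (Engel's theorem) is exactly what bounds these degrees and rules out an infinite ascending tower of $F$-independent elements.

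Part 2 then follows cleanly. By part 1, $FL$ is a finite dimensional nilpotent Lie algebra over $F$ with $RL = RD_0$, so every element has the form $fD_0$ with $f$ ranging over an $F$-subspace of $R$, and here $F = R^{D_0}$. Being nilpotent, $FL$ has a nonzero center; pick $0 \ne hD_0 \in Z(FL)$. For any $fD_0 \in FL$, Lemma \ref{commutators} gives $[hD_0, fD_0] = (hD_0(f) - fD_0(h))D_0 = 0$, whence $D_0(f/h) = 0$ and therefore $f/h \in R^{D_0} = F$. Thus $FL = Fh D_0$ is one dimensional, and in particular abelian, proving part 2.

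For part 3 I would again start from part 1: $FL$ is finite dimensional nilpotent of rank $2$. If $FL$ is abelian, Lemma \ref{MP-3} gives $\dim_F FL = \mathrm{rk}_R L = 2$ and any basis serves as $D_1, D_2$ (this is the case $k = 0$). Otherwise I claim the center is one dimensional; by Lemma \ref{MP-3} it suffices to prove $\mathrm{rk}_R Z(FL) = 1$, which I would get from the same centrality computation as in part 2 carried out inside the rank $2$ algebra. Set $FZ(FL) = FD_1$, choose $D_2 \in FL$ so that $\{D_1, D_2\}$ is an $R$-basis of $RL$; since $D_1$ is central, $[D_1, D_2] = 0$ automatically. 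The operator $\mathrm{ad}(D_2)$ is nilpotent on $FL$ and annihilates $D_1$, and I would analyze its action to produce, via Lemma \ref{commutators}, an element $X \in FL$ with $[D_2, X] = D_1$; expanding the bracket in the basis $\{D_1, D_2\}$ shows that the $D_1$-coefficient $a$ of $X$ satisfies $D_2(a) = 1$, while $D_1(a) = 0$ because $D_1$ is central. After adjusting $X$ within its coset modulo the centralizer of $D_2$ we may take $X = aD_1$. Iterating the equation $[D_2, Y] = (\text{previous term})$ produces the successive preimages of $D_1$ under powers of $\mathrm{ad}(D_2)$, namely the chain $\frac{a^j}{j!}D_1$, and the relations $[D_2, \frac{a^j}{j!}D_1] = \frac{a^{j-1}}{(j-1)!}D_1$ exhibit these as an $\mathrm{ad}(D_2)$-invariant subspace; finiteness from part 1 forces the chain to terminate at some $k$, giving the stated normal form. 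The main obstacle in part 3 is showing that $D_1$ genuinely lies in the image of $\mathrm{ad}(D_2)$ and that no coefficients outside the $F$-span of $1, a, \ldots, a^k$ ever appear, both of which again reduce to the $\mathrm{ad}$-nilpotence supplied by nilpotency.
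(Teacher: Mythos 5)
First, a point of reference: the paper does not prove this lemma at all --- it is imported verbatim from \cite{MP1} (Proposition 1 and Theorem 1 there), so there is no internal proof to compare your attempt against. Judged on its own merits, your parts 2 and 3 are essentially sound in outline: the computation $[hD_0,fD_0]=(hD_0(f)-fD_0(h))D_0=0$ forcing $f/h\in R^{D_0}=F$ is exactly the right mechanism for part 2 (and note it needs only a nonzero central element, not part 1), and the single-Jordan-block analysis of ${\rm ad}\,D_2$ on the abelian ideal $FI$, $I=RD_1\cap L$, is the right mechanism for part 3. Even in part 3, though, you leave one step unargued: to arrive at the normal form $F\langle D_1,aD_1,\ldots,\frac{a^k}{k!}D_1,D_2\rangle$ you must know that $\dim_F FL/FI=1$, i.e.\ that after normalization the $D_2$-components of elements of $FL$ span only $FD_2$ over $F$; this does not follow from merely choosing $D_2$ as an $R$-basis vector, and is itself a separate lemma of \cite{MP1} (the ``Lemma 5'' the present paper invokes repeatedly).

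The genuine gap is part 1, and it sits exactly where you say you expect the hardest point to be. You rightly observe that one cannot induct naively on rank because $L/(RZ\cap L)$ is not a subalgebra of any $W(A')$, but your proposed repair --- ``build a coordinate $a\in R$ with $D_0(a)$ constant and show that the coefficients produced along the lower central series are polynomials of bounded degree in $a$'' --- asserts the conclusion rather than proving it, and in the stated form it is not even the right shape of conclusion for rank $\geq 3$: the paper's own Lemma \ref{rk1} shows that in rank 3 the coefficients need not be polynomials in a single element $a$ but lie in spaces $F\langle a^{i}b^{j}/(i!\,j!)\rangle$ built from two elements $a,b$ attached to two different ``directions,'' and controlling them requires the two-variable integration argument of Lemma \ref{inclusion2} (existence of a potential for a closed form), not just Engel's theorem. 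Since part 3 of your argument leans on part 1 to terminate the Jordan chain, the foundation is not established; to close the gap you would have to reproduce the inductive machinery of \cite{MP1}, Proposition 1, or at least, for the rank-2 case actually needed in part 3, derive the bound on $\dim_F FI$ directly from the nilpotency class of $FL$ via an analogue of Lemma \ref{inclusion1}.
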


\begin{lemma}\label{inclusion1}
Let $D_{1}, D_{2}, D_{3}\in W(A) $ and $a\in R$ be such elements that $D_{1}(a)=D_{2}(a)=0,$  $D_{3}(a)=1$  and let $F=\cap _{i=1}^{3}R^{D_{i}}.$ If there exists an element $b\in R$ such that
$D_{1}(b)=D_{2}(b)=0, \ D_{3}(b)\in F\langle 1, a, \ldots , a^{s}/s!\rangle $ for some $s\geq 0,$  then $b\in F\langle 1, a, \ldots , a^{s+1}/(s+1)!\rangle .$
\end{lemma}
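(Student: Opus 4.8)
The plan is to construct an explicit antiderivative of $D_3(b)$ along the direction $D_3$ and subtract it from $b$, reducing everything to a constant. Unfolding the hypothesis $D_3(b)\in F\langle 1,a,\ldots,a^s/s!\rangle$, there are elements $c_0,\ldots,c_s\in F$ with $D_3(b)=c_0+c_1 a+\cdots+c_s a^s/s!$. The crucial observation is that the normalization $a^k/k!$ is designed precisely so that $D_3$ acts on these elements as an index shift: since $D_3(a)=1$ and each $c_k\in F\subseteq R^{D_3}$, the chain rule gives $D_3\bigl(c_k a^{k+1}/(k+1)!\bigr)=c_k a^k/k!$.

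First I would set $g=c_0 a+c_1 a^2/2!+\cdots+c_s a^{s+1}/(s+1)!$ and verify, using the shift identity above termwise, that $D_3(g)=c_0+c_1 a+\cdots+c_s a^s/s!=D_3(b)$. Next I would put $h=b-g$ and check that $h$ is annihilated by all three derivations. For $D_3$ this is immediate from $D_3(g)=D_3(b)$. For $D_1$ and $D_2$ I would invoke $D_1(a)=D_2(a)=0$ and $D_1(b)=D_2(b)=0$ together with $c_k\in F\subseteq R^{D_1}\cap R^{D_2}$: these force $D_1$ and $D_2$ to kill every summand of $g$, hence $D_1(h)=D_2(h)=0$. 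Consequently $h\in\cap_{i=1}^{3}R^{D_i}=F$, and rewriting $b=h+g$ yields $b\in F\langle 1,a,\ldots,a^{s+1}/(s+1)!\rangle$, which is the desired conclusion; the extreme case $s=0$, where $D_3(b)=c_0\in F$ and $b=h+c_0a\in F\langle 1,a\rangle$, is covered by the same computation.

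I do not expect a genuine obstacle here beyond careful bookkeeping: the entire argument hinges on the single identity $D_3\bigl(a^{k+1}/(k+1)!\bigr)=a^k/k!$, which converts antidifferentiation along $D_3$ into the purely formal operation of raising an index. The one point that must be handled with care is that the coefficients $c_k$ lie in the common field of constants $F$ rather than merely in $R$; this is exactly what guarantees that the constructed antiderivative $g$ inherits the property of being annihilated by $D_1$ and $D_2$, and it explains why the span in both the hypothesis and the conclusion is taken over $F$.
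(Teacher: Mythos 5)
Your proof is correct and is essentially identical to the paper's: both write $D_3(b)=\sum_{i=0}^{s}\beta_i a^i/i!$, form the termwise antiderivative $\sum_{i=0}^{s}\beta_i a^{i+1}/(i+1)!$, and conclude that the difference lies in $F=\cap_{i=1}^{3}R^{D_i}$. No further comment is needed.
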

\begin{proof}
Write down $D_{3}(b)=\sum _{i=0}^{s}\beta _{i}a^{i}/i!$ with $\beta _{i}\in F$ and take the element $c=\sum _{i=0}^{s}\beta _{i}a^{i+1}/(i+1)!$ of the field $R.$ It holds obviously  $D_{3}(b-c)=0$ and (by the conditions of Lemma)  $D_{1}(b-c)=0$ and $D_{2}(b-c)=0.$ Then we have $b-c\in \cap _{i=1}^{3}R^{D_{i}}=F,$ and therefore $b=\gamma +\sum _{i=0}^{s}\beta _{i}a^{i+1}/(i+1)!$ for some element $\gamma \in F.$ The latter means that $b\in F\langle 1, a, \ldots , a^{s+1}/(s+1)!\rangle .$

\end{proof}
\begin{lemma}\label{inclusion2}
Let $D_{1}, D_{2}, D_{3}\in W(A) $ and $a, b \in R$ be such elements that $D_{1}(a)=D_{1}(b)=0,$   $D_{2}(a)=1$ $D_{2}(b)=0,$ $D_{3}(a)=0,$ $ D_{3}(b)=1$ and let  $F=\cap _{i=1}^{3}R^{D_{i}}.$ If  there exists  an element $c\in R$ such that  $D_{1}(c)=0, [D_{2}, D_{3}](c)=0,$  $D_{2}(c)\in F\langle \{\frac{a^{i}b^{j}}{i!  j!}\}, 0\leq i\leq m-1, 0\leq j\leq k\rangle ,$  \ $D_{3}(c)\in F\langle \{\frac{a^{i}b^{j}}{i! j!}\}, 0\leq i\leq m, 0\leq j\leq k-1\rangle ,$ then $c\in F\langle \{\frac{a^{i}b^{j}}{i! j!}\}, 0\leq i\leq m, 0\leq j\leq k\rangle $

\end{lemma}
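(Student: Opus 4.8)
The plan is to mirror the one–variable argument of Lemma \ref{inclusion1}, integrating first in the $a$–direction (with respect to $D_2$) and then reducing the remaining $b$–direction to a direct application of Lemma \ref{inclusion1}. Throughout I use that $D_1, D_2, D_3$ all vanish on $F$ and that $D_1(a)=D_1(b)=0$, so every $F$–polynomial in $a$ and $b$ is annihilated by $D_1$. First I would record the hypotheses explicitly as $D_2(c)=\sum_{i=0}^{m-1}\sum_{j=0}^{k}\alpha_{ij}\frac{a^ib^j}{i!j!}$ and $D_3(c)=\sum_{i=0}^{m}\sum_{j=0}^{k-1}\beta_{ij}\frac{a^ib^j}{i!j!}$ with all $\alpha_{ij},\beta_{ij}\in F$, and form the $a$–antiderivative $c_0=\sum_{i=0}^{m-1}\sum_{j=0}^{k}\alpha_{ij}\frac{a^{i+1}b^j}{(i+1)!j!}$. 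Since $D_2(a)=1$ and $D_2(b)=0$, a direct computation gives $D_2(c_0)=D_2(c)$, so $d:=c-c_0$ satisfies $D_2(d)=0$; moreover $c_0$ already lies in the target space $F\langle\{\frac{a^ib^j}{i!j!}\},\,0\le i\le m,\,0\le j\le k\rangle$, so it remains only to place $d$ there.

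The relevant properties of $d$ are as follows. We have $D_1(d)=0$ and $D_2(d)=0$; also $[D_2,D_3](d)=0$, because $[D_2,D_3]$ is a derivation annihilating $a$, $b$ and $F$ (one checks $[D_2,D_3](a)=[D_2,D_3](b)=0$ directly from the hypotheses), hence it kills the $F$–polynomial $c_0$ and, by assumption, $c$. Finally $D_3(d)=D_3(c)-D_3(c_0)$ lies in $F\langle\{\frac{a^ib^j}{i!j!}\},\,0\le i\le m,\,0\le j\le k-1\rangle$. Now from $[D_2,D_3]=D_2D_3-D_3D_2$ together with $D_2(d)=0$ I obtain $D_2(D_3(d))=0$. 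Writing $D_3(d)=\sum_{i=0}^{m}\sum_{j=0}^{k-1}\gamma_{ij}\frac{a^ib^j}{i!j!}$ and applying $D_2$ gives $\sum_{i\ge 1}\sum_{j}\gamma_{ij}\frac{a^{i-1}b^j}{(i-1)!j!}=0$, which forces $\gamma_{ij}=0$ for all $i\ge 1$. Consequently $D_3(d)\in F\langle 1,b,\ldots,b^{k-1}/(k-1)!\rangle$, a span in the single variable $b$.

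At this point Lemma \ref{inclusion1} applies verbatim, with $D_3$ in the role of the distinguished derivation, $b$ in the role of the variable $a$, and $D_1,D_2$ the two annihilating derivations: indeed $D_1(b)=D_2(b)=0$, $D_3(b)=1$, $D_1(d)=D_2(d)=0$, and $D_3(d)\in F\langle 1,b,\ldots,b^{k-1}/(k-1)!\rangle$. Its conclusion is $d\in F\langle 1,b,\ldots,b^{k}/k!\rangle$, which sits inside the target space. Therefore $c=c_0+d\in F\langle\{\frac{a^ib^j}{i!j!}\},\,0\le i\le m,\,0\le j\le k\rangle$, as required.

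The step I expect to be the main obstacle is the vanishing of $\gamma_{ij}$ for $i\ge 1$: it relies on the $F$–linear independence of the monomials $\frac{a^ib^j}{i!j!}$, equivalently the algebraic independence of $a$ and $b$ over $F$, which is not automatic and must be justified. The clean way is a minimal–degree argument in characteristic zero: a nonzero relation $P(a,b)=0$ with $P\in F[x,y]$ of least total degree would, upon applying $D_2$ and $D_3$ (which act as $\partial_x$ and $\partial_y$, since they fix $F$ and send $a,b$ to $1,0$ and $0,1$ respectively), yield the lower–degree relations $(\partial_xP)(a,b)=(\partial_yP)(a,b)=0$, forcing $\partial_xP=\partial_yP=0$ and hence $P$ constant, a contradiction. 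Once this independence is in hand, the remainder is a routine two–fold repetition of the antiderivative construction already used in Lemma \ref{inclusion1}.
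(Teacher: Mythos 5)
Your proof is correct and follows essentially the same route as the paper's: both construct a polynomial antiderivative of the pair $(D_2(c),D_3(c))$ in $F[a,b]$ using the compatibility supplied by $[D_2,D_3](c)=0$ and then conclude from $c-h\in\cap_{i=1}^{3}R^{D_i}=F$; you merely carry out the integration in two one-variable steps (reusing Lemma \ref{inclusion1} for the $b$-direction) where the paper invokes the potential of the closed form in a single step. A minor added value of your version is the explicit minimal-degree argument for the $F$-linear independence of the monomials $a^ib^j$, a point the paper uses tacitly.
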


\begin{proof}
 The elements $D_{2}(c)$ and $D_{3}(c)$ can be written (by conditions of the lemma) in the form $D_{2}(c)=f(a, b), D_{3}(c)=g(a, b)$ where $f, g\in F[u, v]$ are some  polynomials of $u, v.$  Since $[D_{2}, D_{3}](c)=0,$ it holds $D_{2}(g)=D_{3}(f).$  It follows from the relations   $D_{2}(g)=\frac{\partial}{\partial a}g(a, b), D_{3}(f)=\frac{\partial}{\partial b}f(a, b)$ that  $ \frac{\partial}{\partial a}g(a, b)=\frac{\partial}{\partial b}f(a, b).$  Hence  there exists a polynomial $h(a, b)\in F[a, b]$ (the potential of the  vector field $f(a, b)\frac{\partial}{\partial a}+g(a, b)\frac{\partial}{\partial b}$) such that $D_{3}(h(a, b))=g,  D_{2}(h(a, b))=f.$ The polynomial $h(a, b)$ is obtained from the polynomials $f, g$ by formal integration on $a$ and on $b,$  so we have  $ h(a, b)\in F\langle  \{\frac{a^{i}b^{j}}{i! j!}\}, 0\leq i\leq m, 0\leq j\leq k\rangle .$ Further,  using  properties  of the element $h(a, b)$ we get  $D_{2}(h-c)=D_{3}(h-c)=0.$ Besides,  it  holds  $D_{1}(h-c)=0. $  The latter  means that  $h-c\in F=\cap _{i=1}^{3}R^{D_{i}}.$  But then $c=\gamma +h$ for some $\gamma \in F$ and therefore $c\in F\langle \{\frac{a^{i}b^{j}}{i! j!}\}, 0\leq i\leq m, 0\leq j\leq k\rangle .$
\end{proof}

\section{On nilpotent  subalgebras of small rank of $W(A)$}
\begin{lemma}\label{rk2}
Let $L$ be a nilpotent subalgebra of rank $3$ over $R$ from the Lie algebra $W(A)$ and $F=R^{L}$ be its field of constants. If the center $Z(L)$ of the algebra $L$ is of rank $2$ over $R$ and $\dim _{F}FL\geq 4,$  then there exist $D_1, D_2, D_3\in L, a\in R$  such that the Lie algebra $FL$ is contained in a nilpotent Lie algebra $ \widetilde L$ of the Lie algebra $W(A)$ of the form
$$\widetilde{L}=F\langle D_{3}, D_{1}, aD_{1}, \ldots, (a^{n}/n!)D_{1}, D_{2}, aD_{2}, \ldots , (a^{n}/n!)D_{2}\rangle $$
 for some $n\geq 1, $ with  $[D_{i}, D_{j}]=0, i,j=1,2,3, D_{1}(a)=D_{2}(a)=0, D_{3}(a)=1.$
\end{lemma}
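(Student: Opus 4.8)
The plan is to write $FL$ as two central directions plus one transversal direction, to force the transversal coefficient to be a constant, and then to reconstruct the two towers by integrating along a single element $a$. First I would fix coordinates. By Lemma~\ref{MP-3} the field of constants gives $\dim_F FZ = \mathrm{rk}_R Z = 2$ and $FZ \subseteq Z(FL)$; I choose an $F$-basis $D_1, D_2$ of $FZ$ (so $D_1, D_2$ are $R$-independent and central in $FL$) and complete it to an $R$-basis $D_1, D_2, D_3$ of $RL$ with $D_3 \in FL$ (possible since $R\cdot FL = RL$). Because $D_1, D_2$ are central, $[D_i, D_j] = 0$ for all $i,j$ is automatic. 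Writing $D \in FL$ as $D = \alpha D_1 + \beta D_2 + \gamma D_3$ with $\alpha, \beta, \gamma \in R$, I extract the coefficients from $[D, D_1] = [D, D_2] = 0$: via Lemma~\ref{commutators} these relations force $D_1(\alpha)=D_2(\alpha)=D_1(\beta)=D_2(\beta)=D_1(\gamma)=D_2(\gamma)=0$, so all coefficients lie in the subfield $K := R^{D_1}\cap R^{D_2}$. Note that $D_3$ maps $K$ into $K$ (since $[D_3,D_i]=0$) and that $K^{D_3}=F$. Consequently $M := FL \cap RZ = \{\alpha D_1+\beta D_2 \in FL\}$ is an ideal (Lemma~\ref{MP-4}), and since its coefficients lie in $K$ a short computation with Lemma~\ref{commutators} shows $M$ is abelian.

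The first key step is to prove $\gamma \in F$ for every $D \in FL$. Here I would use that, by Lemma~\ref{commutators}, the assignment $\psi\colon FL \to RD_3$, $D \mapsto \gamma D_3$, is a homomorphism of Lie algebras: one checks that the $D_3$-component of $[D,D']$ is exactly $\gamma D_3(\gamma') - \gamma' D_3(\gamma)$ (the central directions contribute nothing to the $D_3$-component, using $\gamma,\gamma'\in K$), and this equals the $D_3$-coefficient of $[\gamma D_3, \gamma' D_3]$. The kernel of $\psi$ is $M$, so the image $\Gamma D_3 \cong FL/M$ is a nilpotent subalgebra of $W(A)$ of rank $\le 1$ over $R$; as $\psi(D_3)=D_3\ne 0$ the rank is exactly $1$. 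By Lemma~\ref{nilfewdim}(2) its field of constants, namely $R^{D_3}$, must contain $\Gamma$, so $D_3(\gamma)=0$ for every such coefficient; together with $\gamma \in K$ this gives $\gamma \in K^{D_3}=F$.

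Next I would produce the element $a$. If $\mathrm{ad}(D_3)$ vanished on $M$ then $FL = M + FD_3$ would be abelian and $Z(L)$ would have rank $3$, contrary to hypothesis; hence $\mathrm{ad}(D_3)|_M \ne 0$. Since $\mathrm{ad}(D_3)$ acts on $M$ by $\alpha D_1 + \beta D_2 \mapsto D_3(\alpha)D_1 + D_3(\beta)D_2$ and is nilpotent, passing down the nilpotent chain yields a coefficient $\alpha_0 \in K$ with $D_3(\alpha_0) \in F\setminus\{0\}$; rescaling gives $a \in K$ with $D_3(a)=1$, and automatically $D_1(a)=D_2(a)=0$.

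Finally I would reconstruct the towers. For $D = \alpha D_1 + \beta D_2 + \gamma D_3 \in FL$ we now have $\gamma \in F$, so for $j\ge 1$ the identity $\mathrm{ad}(D_3)^j(D) = D_3^j(\alpha)D_1 + D_3^j(\beta)D_2$ holds; if $c$ is the nilpotency class of $FL$ then $\mathrm{ad}(D_3)^c = 0$, and $R$-independence of $D_1, D_2$ forces $D_3^c(\alpha)=D_3^c(\beta)=0$. An induction on $s$, whose inductive step is exactly Lemma~\ref{inclusion1} applied to elements $b \in K$ annihilated by a power of $D_3$, then shows that any such $b$ with $D_3^{s+1}(b)=0$ lies in $F\langle 1, a, \dots, a^{s}/s!\rangle$. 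Applying this to $\alpha$ and $\beta$ with $s = c-1$ places $D$ inside $\widetilde L$ with $n = c-1$; the hypothesis $\dim_F FL \ge 4$ forces $n \ge 1$, and the bracket relations obtained from Lemma~\ref{commutators} confirm that $\widetilde L$ is a nilpotent subalgebra of $W(A)$ of the asserted form containing $FL$. I expect the main obstacle to be the second step, proving $\gamma \in F$, since this is precisely what excludes genuinely three-dimensional transversal behaviour and reduces the situation to the two planar towers handled by the inclusion lemma.
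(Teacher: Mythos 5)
Your proof is correct, and its skeleton is the same as the paper's: split $FL$ into the rank-two central ideal $M=FL\cap(RD_1+RD_2)$ plus a transversal $D_3$, show all coefficients lie in $K=R^{D_1}\cap R^{D_2}$, extract an element $a$ with $D_3(a)=1$, $D_1(a)=D_2(a)=0$, and climb back up with Lemma~\ref{inclusion1}. The execution of the middle steps differs in two worthwhile ways. First, where the paper simply cites $\dim_F FL/FI=1$ and then puts $\mathrm{ad}\,D_3$ on $FI$ into Jordan form with two blocks headed by $D_1$ and $D_2$, you prove the codimension-one statement yourself via the projection $\psi\colon D\mapsto\gamma D_3$, check it is a Lie homomorphism (the cross terms vanish because $\gamma,\gamma'\in K$), and apply Lemma~\ref{nilfewdim}(2) to the rank-one image to conclude $\gamma\in F$; this is more self-contained than the paper's appeal to an external lemma. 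Second, you dispense with Jordan bases entirely: instead of tracking the entries $a_i,b_i$ of the two Jordan chains, you bound $D_3^{c}(\alpha)=D_3^{c}(\beta)=0$ uniformly by the nilpotency class $c$ and run a single induction through Lemma~\ref{inclusion1}, which yields the tower bound $n=c-1$ at once. Both routes use the rank-two hypothesis on $Z(L)$ in the same essential places (to make $D_1,D_2$ central and to rule out $FL$ abelian). One cosmetic point: the statement asks for $D_1,D_2,D_3\in L$, while you take them in $FZ$ and $FL$; since any two $R$-independent elements of $Z(L)$ already form an $F$-basis of $FZ$ and any $D_3\in L\setminus RZ$ works, this is repaired by a one-line adjustment and does not affect the containment $FL\subseteq\widetilde L$.
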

\begin{proof}
Take any elements $D_{1}, D_{2}\in Z(L)$ that are linearly independent over $R$ and denote $I=(RD_{1}+RD_{2})\cap L.$  Then $I$ is an ideal of the Lie algebra $L$ (by Lemma \ref{MP-4}).  Take an arbitrary element $D\in I$ and write down $D=a_{1}D_{1}+a_{2}D_{2}$ for some elements $ a_{i}\in R.$ Since $[D_{i}, D]=0=D_{i}(a_1)D_{1}+D_{i}(a_2)D_{2}, i=1,2$ we get $D_{i}(a_{j})=0, i,j=1,2.$ It follows easily that for any  element $ D'\in I $ it holds the equality $[D, D']=0,$  so the ideal $I$ is abelian. The Lie algebra $FL$ is finite dimensional over $F$ and $\dim _{F}FL/FI=1$ by Lemma \ref{nilfewdim}. Take any element $D_{3}\in L\setminus I.$ Then $FL=FI+FD_{3}$ and $D_{1}, D_{2}, D_{3}$ are linearly independent over $R.$

Since ${\rm rk} _{R}Z(L)=2,$ (by  conditions of the lemma) we
 have \\ ${\rm dim} _{F}FZ(L)=2$ by Lemma \ref{MP-3}. The ideal $I$ of the Lie algebra $L$ is abelian by the above proven, so  the ideal $FI$ of the Lie algebra $FL$ over the field $F$ is also abelian.   Since  $FL=FI+FD_{3},$  there exists  a basis of the $F$-space $FI$ in which  the nilpotent  linear operator ${\rm ad} D_{3}$  has a matrix  consisting of two Jordan blocks. Let $J_{1}$ and $J_{2}$ be the correspondent Jordan bases; without loss of generality one can assume that $D_{1}\in J_{1}, D_{2}\in J_{2}$ and the elements $D_{1}, D_{2}$ are the first members of the bases $J_{1}$ and $J_{2}$ respectively.

If $\dim _{F}F\langle J_{1}\rangle =\dim _{F}F\langle J_{2}\rangle =1,$ then $FL=F\langle D_{3}, D_{1}, D_{2}\rangle $ is of dimension $3$ over $F$  which contradicts the conditions of  the lemma.   So, we may assume that  $\dim _{F}F\langle J_{1}\rangle \geq \dim _{F}F\langle J_{2}\rangle $ and  $\dim _{F}F\langle J_{1}\rangle =n+1, n\geq 1.$  Denote the elements of the basis $J_{1}$ by $ D_{1}, a_{1}D_{1}+b_{1}D_{2}, \ldots , a_{n}D_{1}+b_{n}D_{2},$  where the elements $a_{i}, b_{i}$ belong to  $R$  and put for convenience $a=a_{1}.$ Let us  prove by induction on $i$ that $a_{i}, b_{i}\in F\langle 1, a, \ldots , a^{i}/i!\rangle .$ If $i=1,$ then  $a_{1}=a\in F\langle 1, a\rangle$ by  definition. It follows from the relation $[D_{3}, a_{1}D_{1}+b_{1}D_{2}]=D_{1}=D_3(a_1)D_1+D_3(b_1)D_2$  that $D_{3}(b_{1})=0.$ Since $FI$ is abelian (by the above proven), we have $D_{1}(b_{1})=D_{2}(b_{1})=0.$ The latter means that $b_{1}\in F \subset F\langle 1, a\rangle .$

Further, the relation
$$[D_{3}, a_{i}D_{1}+b_{i}D_{2}]=a_{i-1}D_{1}+b_{i-1}D_{2}=D_3(a_i)D_1+D_3(b_i)D_2$$
 gives the equalities $D_{3}(a_{i})=a_{i-1}$ and $D_{3}(b_{i})=b_{i-1}.$ By the inductive assumption,  $a_{i-1}, b_{i-1}\in F\langle 1, a, \ldots , a^{i-1}/(i-1)!\rangle $ and taking into account the relations $D_{j}(a_{i})=D_{j}(b_{i} )=0, j=1,2$ (they hold because $FI$ is abelian) we get by Lemma \ref{inclusion1} that $a_{i}, b_i \in F\langle 1, \ldots , a^{i}/i!\rangle .$  The latter relation means that the $F$-subspace $F\langle J_{1}\rangle$ of $FI$ lies in the subalgebra $\widetilde L$ from the  conditions of the lemma.

Now let
$$J_{2}=\{ D_{2}, c_{1}D_{1}+d_{1}D_{2}, \ldots , c_{k}D_{1}+d_{k}D_{2}\}$$
 be a basis corresponding to the second Jordan block. The relation $[D_{3}, c_{1}D_{1}+d_{1}D_{2}]=D_{2}$ implies the equality $D_{3}(d_{1}) =1$ and therefore  $D_{3}(a-d_{1})=0. $ Since  $D_{1}(a-d_{1})=D_{2}(a-d_{1})=0,$  we get $a-d_{1}\in F,$ i.e. $d_{1}=a+\gamma$ for some $\gamma \in F.$ Applying  the above considerations   to the Jordan basis $J_{2}$ we obtain that $F\langle J_{2}\rangle \subset \widetilde L.$ But then the  Lie algebra $L$ is entirely contained in $\widetilde L.$
\end{proof}
\begin{lemma}\label{rk1}
Let $L$ be a nilpotent subalgebra of rank $3$ over $R$ from the Lie algebra $W(A)$ and $F=R^{L}$ be its field of constants. If the center  $Z(L)$ of the algebra $L$ is of rank $1$ over $R$  and $\dim _{F}FL\geq 4,$ then the Lie algebra $FL$ is contained  either in the nilpotent Lie algebra $ \widetilde L$ from the conditions of Lemma \ref{rk2}  or in a subalgebra $\overline{L}$ of $W(A)$ of the form
$${\overline{L}}=F\langle D_{3}, D_{2}, aD_{2}, \ldots , (a^{n}/n!)D_{2}, \{ \frac{a^{i}b^{j}}{i! j!}D_{1}\} , 0\leq i, j\leq m\rangle $$
where $ n\geq 0, m\geq 1, D_i \in L, [D_{i}, D_{j}]=0, $ for $ i,j=1,2,3,$  and $ a, b\in R $ such that $D_{1}(a)=D_{2}(a)=0, D_{3}(a)=1, D_{1}(b)=D_{3}(b)=0, D_{2}(b)=1.$
\end{lemma}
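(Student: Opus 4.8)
The plan is to reduce to the rank-two classification (Lemma \ref{nilfewdim}, part 3) applied to a suitable rank-two ideal of $L$, and then to ``straighten'' the remaining generator using the two integration lemmas. Since $\mathrm{rk}_R Z(L)=1$, I first fix $D_1\in Z(L)$ generating the one-dimensional $F$-space $FZ(L)$ (Lemma \ref{MP-3}); then $I_1=RD_1\cap L$ is an abelian ideal of rank $1$ (it is an ideal by Lemma \ref{MP-4}, and for $cD_1\in L$ the relation $[D_1,cD_1]=0$ forces $D_1(c)=0$). Using nilpotency of $L$ I would produce an ideal $I\supseteq I_1$ with $\mathrm{rk}_R I=2$ and $D_1\in I$ (for instance as the preimage in $L$ of a suitable rank-increasing central ideal of $L/I_1$). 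The key observation is that the field of constants $R^I$ of this ideal is in general strictly larger than $F$; this enlargement is exactly what supplies the first coordinate, since the coefficient dual to the outer generator lies in $R^I\setminus F$.

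Next I apply Lemma \ref{nilfewdim}.3 to the rank-two nilpotent algebra $I$ over its own field of constants $F'=R^I$. Because $D_1$ is central in $I$, I may take it to be the chain generator, so $F'I=F'\langle D_1,bD_1,\dots,(b^m/m!)D_1,D_2\rangle$ with $[D_1,D_2]=0$, $D_1(b)=0$, $D_2(b)=1$ for some $m\ge 0$ and some $b\in R$. The whole dichotomy of the lemma is governed by $m$: the case $m=0$ (that is, $I$ abelian) will lead to $\widetilde L$, and $m\ge 1$ (that is, $I$ non-abelian) to $\overline L$. Since $\mathrm{rk}_R L/I=1$, I choose $D_3\in L$ with $RL=RI+RD_3$; as $I$ is an ideal, $\mathrm{ad}\,D_3$ preserves $I$ and is nilpotent on it, and the coordinate $a\in R^I$ with $D_3(a)=1$, $D_1(a)=D_2(a)=0$ is read off from the Jordan action of $\mathrm{ad}\,D_3$ on $FI$ (its existence uses the hypothesis $\dim_F FL\ge 4$). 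After replacing $D_3$ and $b$ by suitable modifications I would arrange the full set of relations $[D_i,D_j]=0$ for $i,j\in\{1,2,3\}$ together with $D_3(b)=0$.

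With the commuting triple $D_1,D_2,D_3$ and coordinates $a,b$ in place, the final step is an induction identifying the coefficient of every basis vector of the finite-dimensional algebra $FL$ (finite by Lemma \ref{nilfewdim}.1). For the elements lying over $D_2$ one reads off from $[D_3,\,\cdot\,]$ that their coefficients are primitives in the single variable $a$, so Lemma \ref{inclusion1} places them in $F\langle 1,a,\dots,a^{n}/n!\rangle$; for the elements lying over $D_1$ one controls simultaneously $D_2(\,\cdot\,)$ and $D_3(\,\cdot\,)$ and invokes the two-variable integration Lemma \ref{inclusion2} to place their coefficients in $F\langle\{a^i b^j/i!\,j!\}\rangle$. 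Assembling these containments shows $FL\subseteq\widetilde L$ when $m=0$ and $FL\subseteq\overline L$ when $m\ge 1$.

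I expect the main obstacle to be the straightening in the second paragraph: passing from the relations that hold only modulo $I$ (which is all that Lemma \ref{nilfewdim}.3 together with the choice of $D_3$ directly gives) to the exact identities $[D_2,D_3]=0$ and $D_3(b)=0$, while keeping $a\in R^I$ and $D_1(a)=D_2(a)=0$. This amounts to solving the relevant coboundary equations inside the rank-one abelian ideal $I_1$, and it is here that nilpotency of $L$ and the precise form of $\mathrm{ad}\,D_3$ on $FI$ must be used carefully; once the coordinates are correctly normalized, each inductive step becomes a routine verification of the hypotheses of Lemmas \ref{inclusion1} and \ref{inclusion2}.
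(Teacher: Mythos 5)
Your overall architecture coincides with the paper's: the chain of ideals $I_1=RD_1\cap L\subset I_2=(RD_1+RD_2)\cap L$ (with $D_2+I_1$ central in $L/I_1$), the dichotomy ``abelian rank-two ideal $\Rightarrow\widetilde L$, non-abelian $\Rightarrow\overline L$'', and the concluding induction on coefficients via Lemmas \ref{inclusion1} and \ref{inclusion2} are exactly what the paper does. The genuine gap is the step you yourself flag as the main obstacle: producing $a,b$ with the \emph{full} set of relations $D_1(a)=D_2(a)=0$, $D_3(a)=1$, $D_1(b)=D_3(b)=0$, $D_2(b)=1$. Your route --- applying Lemma \ref{nilfewdim}.3 to $I$ over its own constant field $R^{I}$ --- delivers a $b$ with $D_2(b)=1$ but gives no control over $D_3(b)$, and ``solving the relevant coboundary equations inside $I_1$'' is not routine: the system $D_3(c)=D_3(b)$, $D_1(c)=D_2(c)=0$ need not be solvable for an arbitrary such $b$. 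The paper sidesteps the normalization rather than solving it: since $[D_3,D_2]=rD_1$ acts trivially on the abelian ideal $FI_1$, the nilpotent operators $\ad D_2$ and $\ad D_3$ \emph{commute} on $FI_1$; hence $M_2=\Ker(\ad D_2)|_{FI_1}$ and $M_3=\Ker(\ad D_3)|_{FI_1}$ are each invariant under the other operator and each is a single Jordan chain (a common kernel vector would enlarge $Z(FL)$ beyond dimension one), so reading $a$ off the chain $M_2$ and $b$ off the chain $M_3$ builds $D_2(a)=0$ and $D_3(b)=0$ into the construction. Only $[D_3,D_2]=0$ is arranged afterwards, by the correction $D_2\mapsto D_2-r_0D_1$, and that works precisely because by then $r$ is already known to lie in $F\langle a^ib^j/i!\,j!\rangle$, so a formal primitive $r_0$ in $a$ exists.

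A second omission: your dichotomy is attached to one fixed rank-two ideal $I$. The paper first reduces the non-abelian case to $C_L(I_1)=I_1$; if the centralizer is strictly larger, it replaces $I_2$ by an \emph{abelian} rank-two ideal $I_4=(RD_1+RD_4)\cap L$ and lands back in the $\widetilde L$ branch. Without this reduction, in your non-abelian branch the operator $\ad D_2$ may annihilate $FI_1$, in which case $M_3$ degenerates to $FD_1$ and no coordinate $b$ with $D_2(b)=1$ can be extracted from the ideal, so the $\overline L$ description (which requires $m\ge 1$) is not reachable by the argument as you set it up. Your closing induction, once the normalized commuting triple and the coordinates $a,b$ are in hand, does match the paper's and is correct in outline.
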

\begin{proof}
Take any nonzero element $D_{1}\in Z(L)$ and denote $I_{1}=RD_{1}\cap L.$ Then $I_{1}$  is an abelian ideal of the algebra Lie $L$ and ${\rm rk}_{R}I_{1}=1$  by Lemma \ref{MP-3}. Choose any nonzero element $D_{2}+I_{1}$
in the center of the quotient Lie algebra $L/I_{1}$ and denote  $I_{2}=(RD_{1}+RD_{2})\cap L.$ By the same Lemma \ref{MP-3},   $I_{2}$ is an ideal of the Lie algebra $L$ and ${\rm rk}_{R}I_{2}=2.$ Further, take any element $D_{3}\in L\setminus I_{2}.$ Since $\dim _{F}FL/FI_{2}=1$ by Lemma 5 from the paper \cite{MP1},  we have $FL=FI_{2}+FD_{3}.$

{\underline{Case 1.}} The ideal $I_{2}$ is abelian. Let us show that $FL$ is contained in the Lie algebra $\widetilde L$ from the conditions of Lemma \ref{rk2}. It is obvious that $FI_{2}$ is an abelian ideal of codimension $1$ of the Lie algebra $FL$ over the field $F.$ By Lemma \ref{MP-3},  ${\rm rk}_{R}Z(L)={\rm dim} _{F}FZ(L)$  and by the  conditions of the lemma, we see that ${\rm dim} _{F}FZ(L)=1.$  The linear operator ${\rm ad} D_{3}$ acts on the $F$-space $FI_{2}$ and ${\rm dim} _{F}{\rm Ker} ({\rm ad} D_{3})={\rm dim} _{F}FZ(L).$ Therefore ${\rm dim} _{F}{\rm Ker} ({\rm ad} D_{3})=1$ and there exists a basis  of $FI_{2}$ in which ${\rm ad} D_{3}$ has a  matrix in the form of  a single Jordan block. The same is true for the action of ${\rm ad }D_{3}$ on the vector space $FI_{1}$ (since $[D_{3}, I_{1}]\subseteq I_{1},$ the ideal $FI_{1}$ is invariant under ${\rm ad} D_{3}$). The subalgebra $FI_{1}+FD_{3}$  is of rank $2$ over $R.$ If ${\rm dim} _{F}FI_{1}>1,$ then the center of the Lie algebra $F_{1}+FD_{3}$ is of dimension $1$  over $F.$ By Lemma \ref{nilfewdim},
 there exists a Jordan basis in $FI_{1}$ of the form
$$\{ D_{1}, aD_{1}, \ldots , (a^{s}/s!)D_{1}\}, \ \mbox{where} \ s\geq 0, D_{3}(a)=1, [D_{3}, D_{1}]=0.$$
If $\dim _{F}FI_{1}=1,$ then $s=0$  and the desired basis of $F_{1}$ is of the form $\{ D_{1}\}$.

Let first $s>0.$
Since $(FD_{2}+FI_{1})/FI_{1}$ is a central ideal of the quotient algebra $FL/FI_{1},$ we have $[D_{3}, D_{2}]\in FI_{1}$ and  hence one can  write $[D_{3}, D_{2}]=\gamma _{0}D_{1}+\ldots +(\gamma _{s}a^{s}/s!)D_{1}$  for some $\gamma _{i}\in F.$ Taking $D_{2}-\sum _{i=0}^{s-1}(\gamma _{i}a^{i+1}/(i+1)!)D_{1}$ instead $D_{2}$ we may assume that $[D_{3}, D_{2}]=\gamma _{s}a^{s}/s!D_{1}. $  Note that $\gamma _{s}\not =0.$
Really, in the opposite case  $[D_3, D_2]=0$ and therefore $D_{2}\in Z(L).$  Then ${\rm rk}_{R}Z(L)=2$ which is impossible because of  the conditions of the lemma. After changing  $D_{3}$ by $\gamma _{s}^{-1}D_{3}$ we may assume that $[D_{3}, D_{2}]=(a^{s}/s!)D_{1}.$

Since the linear operator ${\rm ad} D_{3}$ has in a basis of the $F$-space $FI_{2}$ a matrix, consisting of a single Jordan block, the same is true for the linear operator ${\rm ad} D_{3}$ on the vector space $FI_{2}/FI_{1}.$  Let $\dim _{F} FI_{2}/FI_{1}=k$ and $\{ {\overline S_{1}}, \ldots ,  {\overline S_{k}}\}$  be a Jordan basis for ${\rm ad} D_{3}$ on $FI_{2}/FI_{1},$  where ${\overline S_{i}}=(c_{i}D_{1}+d_{i}D_{2})+FI_{1}, \ i=1, \ldots , k, c_{i}, d_{i}\in R.$  The representatives $c_{i}D_{1}+d_{i}D_{2}$ of the cosets ${\overline S_{i}}$ can be chosen in such a way that $[D_{3}, c_{i}D_{1}+d_{i}D_{2}]=c_{i-1}D_{1}+d_{i-1}D_{2}, i=2, \ldots , k$  and
$$   [D_{3}, c_{1}D_{1}+d_{1}D_{2}]=\sum _{i=0}^{s}\beta _{i}(a^{i}/i!)D_{1} \eqno (1)    $$
for some $\beta _{i}\in F.$  Let us show by induction on $i$ that the relations hold:
$$ d_{i}\in F\langle 1, \ldots , a^{i-1}/(i-1)!\rangle, \ \ c_{i}\in F\langle 1, \ldots , a^{s+i}/(s+i)!\rangle .\eqno (2)$$
Really, for $i=1$ it follows from the relation (1) that
$$ [D_{3}, c_{1}D_{1}+d_{1}D_{2}]= \sum _{i=0}^{s}\beta _{i}a^{i}/i!D_{1}= D_{3}(c_{1})D_{1}+(d_{1}a^{s}/s!)D_{1}+D_{3}(d_{1})D_{2}.  \eqno (3) $$
  It follows from (3) that $D_{3}(d_{1})=0$, and since the ideal $FI_{2}$ is abelian,  it holds $D_{1}(d_{1})=D_{2}(d_{1})=0.$  The latter means that $d_{1}\in F=F\langle 1\rangle .$ We also get from (3) that $D_{3}(c_{1})\in F\langle 1, \ldots , a^{s}/s!\rangle $ and obviously it holds $D_{1}(c_{1})=D_{2}(c_{1})=0.$ Then, by Lemma \ref{inclusion1},  $c_{1}\in F\langle 1, a, \ldots , a^{s+1}/(s+1)!\rangle $ and the relations (2) hold for $i=1.$ Assume they   hold for $i-1.$ Let us prove that the relations (2)  hold for $i.$ Using the equalities
$[D_{3}, c_{i}D_{1}+d_{i}D_{2}]=c_{i-1}D_{1}+d_{i-1}D_{2}$   and  $[D_{3}, D_{2}]=a^{s}/s!D_{1}$ we get $D_{3}(d_{i})=d_{i-1}, D_{3}(c_{i})=d_{i}a^{s}/s!+c_{i-1}.$  By the inductive assumption, we have  $d_{i-1}\in F\langle 1, a, \ldots , a^{i-2}/(i-2)!\rangle ,$ hence  $ d_{i}\in F\langle 1, a, \ldots , a^{i-1}/(i-1)!\rangle $ by Lemma \ref{inclusion1}. Analogously, by the inductive assumption it holds  $$c_{i-1}\in F\langle 1, a, \ldots , a^{s+i-1}/(s+i-1)!\rangle $$ and therefore $D_{3}(c_{i})\in F\langle 1, a, \ldots , a^{s+i-1}/(s+i-1)!\rangle .$ Since $D_{1}(c_{i})=D_{2}(c_{i})=0$ we get  by Lemma \ref{inclusion1} that  $c_{i}\in F\langle 1, a, \ldots , a^{s+i}/(s+i)!\rangle .$ But then we have inclusion $$FI_{2}\subseteq F\langle D_{1}, aD_{1}, \ldots , (a^{s+k}/(s+k)!)D_{1}, D_{2}, aD_{2}, \ldots , (a^{k}/k!)D_{2}\rangle .$$ The last subalgebra of the Lie algebra $W(R)$  is contained in the subalgebra of the form $$F\langle D_{1}, aD_{1}, \ldots , (a^{s+k}/(s+k)!)D_{1}, D_{2}, aD_{2}, \ldots , (a^{s+k}/(s+k)!)D_{2}\rangle . $$
But then the Lie algebra $L$ is contained in the subalgebra $\widetilde L$  from the conditions  Lemma \ref{rk2}.

Let now $s=0.$ Then  $FI_{1}=FD_{1}$ and without loss of generality we may assume that $[D_3, D_2]=D_1.$ Repeating the above considerations we can build a Jordan basis $\{ (c_iD_1+d_iD_2)+FI_1, i=1, \ldots , k\}$ of the quotient algebra $FI_2/FI_1$ with $[D_3, c_iD_1+d_iD_2]=c_{i-1}D_1+d_{i-1}D_2, i=2, \ldots , k$ and $[D_3, c_1D_1+d_1D_2]=\alpha D_1$ for some $\alpha \in F.$ It follows from the last  equality that $D_3(d_1)=0$ and taking into account the equalities $D_1(d_1)=0$ and $D_2(d_1)=0$ we see that $d_1\in F.$  Since $a_1D_1+d_1D_2\not\in FI_1,$  we have  $d_1\not=0.$
By conditions of the lemma, $\dim _{F}FL>3,$  so we have $k\geq 2$ and the relation $[D_3,  c_2D_1+d_2D_2]=c_1D_1+d_1D_2$ implies the equality $D_3(d_2)=d_1.$ But then $D_3(d_2d_1^{-1})=1$ and multiplying all the elements of  the Jordan basis considered above by $d_{1}^{-1}$ we may assume that $D_3(d_2)=1.$
 Denoting  $a=d_2$  and repeating the considerations from the subcase $s>0$ we see that the Lie algebra $L$ is contained in the subalgebra $\widetilde L$  from the conditions  of Lemma \ref{rk2}.

{\underline{Case 2.}} The ideal $I_{2}$ is nonabelian. We may assume without loss of generality that $I_{1}$ coincides with its centralizer in $L$, i.e.  $C_{L}(I_{1})=I_{1}.$ Really, let $C_{L}(I_{1})\supset I_{1}$  with strong containment. Choose a one-dimensional (central) ideal $(D_{4}+I_{1})/I_{1}$ in the ideal $C_{L}(I_{1})/I_{1}$ of the quotient algebra $L/I_{1}.$ Then $I_{4}:=(RD_{1}+RD_{4})\cap L$ is an abelian ideal of rank $2$ of the algebra $L$ and $\dim _{F}FL/FI_{4}=1$ by Lemma 5 from \cite{MP1}. Thus the problem is reduced to the case 1 (one should take $FI_{4}$ instead of $FI_{2}$).  So, we assume that $C_{L}(I_{1})=I_{1}.$  It follows from this equality that   $C_{FL}(FI_{1})=FI_{1}.$

As in the case 1 we write $FL=FI_{2}+FD_{3}$ and $[D_{3}, D_{2}]=rD_{1}$ for some $r\in R.$ Since the ideal $FI_{1}$ is abelian, the linear operator ${\rm ad} [D_{3}, D_{2}]={\rm ad}( rD_{1})$ acts trivially on the vector space $FI_{1},$ and therefore the linear operators ${\rm ad} D_{2}$ and ${\rm ad} D_{3}$ commute on  $FI_{1}.$  Denote by $M_{2}$ the kernel ${\rm Ker} ({\rm ad} D_{2})$ on the $F$-space $FI_{1}.$ It is obvious that $M_{2}$ is an abelian subalgebra of $FI_{1}$  and $M_{2}$ is invariant under the  action of ${\rm ad} D_{3}.$ Since $[D_{1}, M_{2}]=[D_{2}, M_{2}]=0$ the linear operator ${\rm ad} D_{3}$ has on the  $F$-space $FI_{1}$ the kernel of dimension $1$ (in other case the center of the Lie algebra $FL$ would have dimension $\geq 2$ over $F$ which contradicts  our assumption). Using Lemma \ref{nilfewdim} one can easily show that
$$ M_{2}=F\langle  D_{1}, aD_{1}, \ldots , (a^{k}/k!)D_{1}\rangle $$
 for some  $a\in R$ with $D_{1}(a)=0, D_{2}(a)=0, D_{3}(a)=1$ (if $k=0,$  then put $M_{2}=FD_{1}).$
Further denote $M_{3}={\rm Ker} ({\rm ad} D_{3})$ on the vector space $FI_{1}.$ As above one can prove that $M_{3}$ is invariant under action of ${\rm ad} D_{2},$ this linear operator has one-dimensional  kernel on $M_{3},$ and
$$M_{3}=F\langle D_{1}, bD_{1}, \ldots , (b^{m}/m!)D_{1}\rangle$$
for some $b\in R$ with  $D_{1}(b)=D_{3}(b)=0$ and $D_{2}(b)=1$
(if $m=0$ put $M_{3}=FD_{1}$).

Take now any element $cD_{1}$ of the ideal $FI_{1}, c\in R.$  Since the linear operators ${\rm ad} D_{2}$ and ${\rm ad} D_{3}$ act nilpotently on $FI_{1},$ there exist the least positive integers $k_{0} $ and $m_{0}$ (depending on the element $cD_{1}$)  such that $({\rm ad} D_{2})^{k_{0}}(cD_{1})=0, ({\rm ad} D_{3})^{m_{0}}(cD_{1})=0.$ Let us show by induction on $s=m_{0}+k_{0}$ that the element $cD_{1}$ is a linear combination (with coefficients from $F$) of elements of the form $\frac{a^{i}b^{j}}{i! j!}D_1\in W(A)$ for some $0\leq i\leq k_{0}-1, 0\leq j\leq m_{0}-1$ (note that the elements  $\frac{a^{i}b^{j}}{i! j!}D_1$ can be outside of $FI_{1}$).
If $s=2$ (obviously $s\geq 2$), then we must only consider the case $m_{0}=1, k_{0}=1.$ In this case, we have   $[D_{3}, cD_{1}]=0, [D_{2}, cD_{1}]=0.$ These equalities imply  that $cD_{1}\in Z(FL)=FD_{1}$ and all is done. Let $s\geq 3.$ The element $[D_{2}, cD_{1}]$ can be written by the inductive assumption  in the form
$$ [D_{2}, cD_{1}]=\sum _{i=0}^{k_{0}-2}\sum _{j=0}^{m_{0}-1}\gamma _{ij}\frac{a^{i}b^{j}}{i! j!}D_{1} \  \mbox{for some } \ \gamma _{ij}\in F. $$
Analogously we get
$$ [D_{3}, cD_{1}]=\sum _{i=0}^{k_{0}-1}\sum _{j=0}^{m_{0}-2}\delta _{ij}\frac{a^{i}b^{j}}{i! j!}D_{1} \  \mbox{for some } \ \delta _{ij}\in F.$$
It follows from the previous two equalities that
$$ D_{2}(c)=\sum _{i=0}^{k_{0}-2}\sum _{j=0}^{m_{0}-1}\gamma _{ij}\frac{a^{i}b^{j}}{i! j!}, \ \  \ D_{3}(c)=\sum _{i=0}^{k_{0}-1}\sum _{j=0}^{m_{0}-2}\delta _{ij}\frac{a^{i}b^{j}}{i! j!}.$$
Note that $[D_{3}, D_{2}](c)=rD_{1}(c)=0$ and therefore by Lemma \ref{inclusion2} $c\in F\langle \frac{a^{i}b^{j}}{i! j!}, \ 0\leq i\leq k_{0}-1, 0\leq j\leq m_{0}-1 \rangle .$ Since $cD_{1}$ is arbitrarily chosen we have $FI_{1}\subseteq F\langle \frac{a^{i}b^{j}}{i! j!}D_{1}, 0\leq i\leq k_{0}-1, 0\leq j\leq m_{0}-1\rangle .$ One can straightforwardly  check that $k_{0}\leq k,$ where $k=\dim M_{2}-1$ and analogously $m_{0}\leq m= \dim M_{3}-1.$
Let, for example,  $m\geq n.$ Then $FI_{1}\subseteq F\langle \frac{a^{i}b^{j}}{i! j!}D_{1}, 0\leq i, j\leq m\rangle .$

Further, by the above proven, the linear operator ${\rm ad} D_{3}$ on the vector space $FI_{2}/FI_{1}$ has a matrix   in a  basis in the form of a single Jordan block.  This  basis can be chosen in the form
$ (u_{1}D_{1}+v_{1}D_{2})+FI_{1}, \ldots , (u_{t}D_{1}+v_{t}D_{2})+FI_{1}$ such that
$$[D_{3}, u_{i}D_{1}+v_{i}D_{2}]=u_{i-1}D_{1}+v_{i-1}D_{2}, i\geq 2 ,  \ [D_{3}, u_{1}D_{1}+v_{1}D_{2}]=fD_{1} \eqno (4) $$
 for some element $f,$ \  $ f\in F\langle \frac{a^{i}b^{j}}{i! j!}, 0\leq i, j\leq m\rangle .$ Let us show by induction on $s$ that
$$ u_{s}\in F\langle \frac{a^{i}b^{j}}{i! j!}, \ 0\leq i,  j\leq m+s \rangle ,  v_{s}\in F\langle 1, \ldots , a^{s-1}/(s-1)!\rangle  .$$
If $s=1,$ then the equalities
$$[D_{3}, u_{1}D_{1}+v_{1}D_{2}]=fD_{1}=D_{3}(u_{1})D_{1}+D_{3}(v_{1})D_{2}+v_{1}rD_{1} \eqno (5) $$
imply  $D_{3}(v_{1})=0$ (let us recall here that $[D_{3}, D_{2}]=rD_{1}$). Taking into account the relations $[D_{1}, u_{1}D_{1}+v_{1}D_{2}]=0$ and $[D_{2}, u_{1}D_{1}+v_{1}D_{2}]\in FI_{1}$ we obtain that $v_{1}\in \cap _{i=1}^{3}R^{D_{i}}=F,$ that is $v_{1}\in F\langle 1\rangle .$
It follows from the relations (4) that $$D_{3}(u_{1})+v_1r\in F\langle \{\frac{a^{i}b^{j}}{i! j! }\}, \ 0\leq i, j\leq m. \rangle .$$ Analogously the inclusion $[D_{2}, u_{1}D_{1}+v_{1}D_{2}]\in FI_{1}$ implies the relation
$$D_{2}(u_{1})\in  F\langle \{\frac{a^{i}b^{j}}{i! j! }\}, \ 0\leq i, j\leq m. \rangle .$$
Since $[D_{3}, D_{2}]=rD_{1}$ and $rD_{1}(u_{1})=0,$ we see (using  Lemma \ref{inclusion2}) that $$u_{1}\in F\langle \{\frac{a^{i}b^{j}}{i! j! }\}, \ 0\leq i, j\leq m+1. \rangle .$$
By inductive assumption, we have
$$u_{s-1}\in F\langle \{\frac{a^{i}b^{j}}{i! j! }\}, \ 0\leq i, j\leq m+s-1 \rangle , \ \ v_{s-1}\in F\langle 1, \ldots , \frac{a^{s-2}}{(s-2)!}\rangle .$$
Note that the relations (4) imply the equalities  $D_{3}(u_{s})=u_{s-1}-rv_{s}, \ D_{3}(v_{s})=v_{s-1}$ (here $[D_{3}, D_{2}]=rD_{1}$). Analogously it follows from the relation $[D_{2}, u_{s}D_{1}+v_{s}D_{2}]\in FI_{1}$  that $$D_{2}(v_{s})=0, D_{2}(u_{s})\in  F\langle \{\frac{a^{i}b^{j}}{i! j! }D_{1}\}, \ 0\leq i, j\leq m \rangle .$$
 Since $D_{1}\in Z(L),$ we have the equalities   $D_{1}(u_{s})=D_{1}(v_{s})=0.$   Therefore we get by Lemma \ref{inclusion1} that $v_{s}\in F\langle 1, \ldots , a^{s-1}/(s-1)!\rangle .$ By Lemma \ref{inclusion2}, $u_{s}\in F\langle \{\frac{a^{i}b^{j}}{i! j! }\}, \ 0\leq i, j\leq m+s \rangle $ (since $D_{3}(u_{s})\in F\langle \{\frac{a^{i}b^{j}}{i! j! }\}, \ 0\leq i, j\leq m+s-1 \rangle $ by the relations (4)).
Since $rD_{1}\in FI_{1},$ we have  $rv_{s}\in F\langle \{\frac{a^{i}b^{j}}{i! j! }\}, \ 0\leq i, j\leq m+s-1 \rangle .$
But then by Lemma \ref{inclusion2} $u_{s}\in F\langle \{\frac{a^{i}b^{j}}{i! j! }\}, \ 0\leq i, j\leq m+s \rangle .$
So, we have proved that the Lie algebra $L$ is contained    in the subalgebra $\overline L$ from the conditions of the lemma. To finish with the proof we must prove that  the element $D_{2}$  can be chosen in $W(A)$ in such a way that
$[D_3, D_2]=0.$ Take the element $D_{2}-r_{0}D_{1}$ instead $D_2,$ where the element $r_0$ is obtained from $r$ by formal integration on variable $a$ (recall that $r\in F\langle \{\frac{a^{i}b^{j}}{i! j!} \}, \ 0\leq i, j\leq m \rangle $). Then $[D_{3}, D_{2}]=0.$ The proof is complete.
 \end{proof}

\begin{theorem}\label{th1}
Let $\mathbb K$ be a field of characteristic zero, $A$ an integral $\mathbb K$-domain with fraction field $R.$
 Denote by $W(A)$ the subalgebra $R{\rm Der} _{\mathbb K}A$  of the Lie algebra ${\rm Der} _{\mathbb K}R.$  Let $L$ be a nilpotent subalgebra of rank $3$ over $R$ from $W(A)$ and $F=R^{L}$ its field of constants. If $\dim _{F}FL\geq 4,$ then there exist  integers $n\geq 0,  m\geq 0,$  elements $  D_{1}, D_{2}, D_{3}\in FL$ such that $ [D_{i}, D_{j}]=0, i,j=1,2,3$ and   the Lie algebra $FL$ is contained in one of the following subalgebras of the Lie algebra  $W(A):$

1) $ L_{1}=F\langle D_{3}, D_{1}, aD_{1}, \ldots, (a^{n}/n!)D_{1}, D_{2}, aD_{2}, \ldots , (a^{n}/n!)D_{2}\rangle ,$\\
where $a\in  R$ is  such that $ D_{1}(a)=D_{2}(a)=0, D_{3}(a)=1.$

2) $L_{2}=F\langle D_{3}, D_{2}, aD_{2}, \ldots , (a^{n}/n!)D_{2}, \{ \frac{a^{i}b^{j}}{i! j!}D_{1}\} , 0\leq i, j\leq m\rangle $ where $a, b\in R $ are such that $D_{1}(a)=D_{2}(a)=0, D_{3}(a)=1, D_{1}(b)=D_{3}(b)=0, D_{2}(b)=1.$

\end{theorem}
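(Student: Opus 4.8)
The plan is to prove the theorem by a case analysis on $s:={\rm rk}_{R}Z(L)$, reducing two of the three cases directly to the structural lemmas already in hand. Since $\dim_{F}FL\geq 4$, the algebra $L$ is a nonzero nilpotent Lie algebra, so its center $Z(L)$ is nonzero and $s\in\{1,2,3\}$. If $s=2$, then Lemma \ref{rk2} applies verbatim and places $FL$ inside a subalgebra $\widetilde{L}$, which is of the form $L_{1}$ in the statement. If $s=1$, then Lemma \ref{rk1} applies and places $FL$ inside a subalgebra of the form $L_{1}$ (its $\widetilde{L}$) or one of the form $L_{2}$ (its $\overline{L}$). Thus the only genuinely new work is to exclude the remaining possibility $s=3$.

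To rule out $s=3$, I would first note that ${\rm rk}_{R}Z(L)=3={\rm rk}_{R}L$ forces $RZ(L)=RL$, hence $L\subseteq RZ(L)$. Fixing an $R$-basis $Z_{1},Z_{2},Z_{3}$ of $RZ(L)$ with each $Z_{i}\in Z(L)$, I would write an arbitrary $D=\sum_{i}a_{i}Z_{i}\in L$ and, for any central $Z$, expand $0=[Z,D]=\sum_{i}Z(a_{i})Z_{i}$ using Lemma \ref{commutators}; the $R$-independence of the $Z_{i}$ then gives $Z(a_{i})=0$ for all $i$ and all central $Z$. Feeding this into the bracket of two elements $D=\sum_{i}a_{i}Z_{i}$ and $D'=\sum_{j}b_{j}Z_{j}$ of $L$, Lemma \ref{commutators} yields $[D,D']=\sum_{i,j}\left(a_{i}b_{j}[Z_{i},Z_{j}]+a_{i}Z_{i}(b_{j})Z_{j}-b_{j}Z_{j}(a_{i})Z_{i}\right)=0$, since $[Z_{i},Z_{j}]=0$ and $Z_{i}(b_{j})=Z_{j}(a_{i})=0$ by the previous step. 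Hence $L$ is abelian, so $\dim_{F}FL={\rm rk}_{R}L=3$ by Lemma \ref{MP-3}, contradicting $\dim_{F}FL\geq 4$. Therefore $s=3$ is impossible.

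Finally I would record that the triples $D_{1},D_{2},D_{3}$ and the elements $a$ (and $b$) supplied by Lemmas \ref{rk2} and \ref{rk1} already satisfy all the commuting and derivation relations demanded in the statements of $L_{1}$ and $L_{2}$, so no further normalization is needed. The main obstacle here is not computational but organizational: essentially all of the difficulty has been pushed into Lemmas \ref{rk2} and \ref{rk1}, and the only fresh ingredient is the short rank-collapse argument, whose crux is recognizing that the equality ${\rm rk}_{R}Z(L)={\rm rk}_{R}L$ squeezes $L$ into $RZ(L)$ and thereby forces commutativity.
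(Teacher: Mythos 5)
Your proposal is correct and follows essentially the route the paper intends: Theorem \ref{th1} is stated without a separate proof precisely because it is the union of Lemmas \ref{rk2} and \ref{rk1}, which cover ${\rm rk}_{R}Z(L)=2$ and ${\rm rk}_{R}Z(L)=1$ respectively. Your explicit exclusion of the case ${\rm rk}_{R}Z(L)=3$ (forcing $L$ abelian via Lemma \ref{commutators} and then $\dim_{F}FL=3$ via Lemma \ref{MP-3}) is a detail the paper leaves implicit, and your argument for it is sound.
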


As a corollary we get the next characterization of nilpotent Lie algebras of rank $\leq 3$ from the Lie algebra $W(A).$
\begin{theorem}\label{th2}
Under conditions of Theorem 1, every nilpotent subalgebra  $L$  of rank $k\leq 3$ over $R$ from the Lie algebra $W(A)$  is isomorphic to a finite dimensional subalgebra of the triangular Lie algebra $u_{k}(F).$
\end{theorem}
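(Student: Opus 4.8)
The plan is to reduce everything to the explicit ``triangular normal forms'' already produced by the structural results and then, in each case, to write down a concrete $F$-linear map from $FL$ into $u_k(F)$ and check that it is a Lie algebra isomorphism onto its image. Since $F\subseteq R$, any elements of $L$ that are linearly independent over $R$ stay independent over $F$, so $\dim_F FL\ge {\rm rk}_R L=k$; together with Lemma \ref{nilfewdim}(1) this shows $FL$ is finite dimensional over $F$. Because $L\subseteq FL$, it suffices to embed the $F$-Lie algebra $FL$, and then restricting the embedding to the $\mathbb K$-subalgebra $L$ yields the claim for $L$ itself. All the embeddings will have the same shape: a divided-power generator such as $\frac{a^i}{i!}D_1$, $\frac{a^i}{i!}D_2$ or $\frac{a^ib^j}{i!\,j!}D_1$ is sent to the monomial field $\frac{x_3^i}{i!}\frac{\partial}{\partial x_1}$, $\frac{x_3^i}{i!}\frac{\partial}{\partial x_2}$ or $\frac{x_3^i x_2^j}{i!\,j!}\frac{\partial}{\partial x_1}$, together with $D_3\mapsto \frac{\partial}{\partial x_3}$ and $D_2\mapsto\frac{\partial}{\partial x_2}$.

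Next I would split according to $k$. For $k=1$, Lemma \ref{nilfewdim}(2) gives $FL=FD_1$, mapped onto $F\frac{\partial}{\partial x_1}=u_1(F)$. For $k=2$, Lemma \ref{nilfewdim}(3) gives $FL=F\langle D_1,aD_1,\dots,\frac{a^s}{s!}D_1,D_2\rangle$ with $D_1(a)=0$, $D_2(a)=1$, $[D_1,D_2]=0$, which the rule above sends into $u_2(F)$. For $k=3$ there are two regimes. If $\dim_F FL\ge 4$, Theorem \ref{th1} places $FL$ inside $L_1$ or $L_2$, whose generators already have the divided-power shape, so the same map applies. If $\dim_F FL=3$ (the minimal value, not covered by Theorem \ref{th1}), then $FL$ is a three dimensional nilpotent Lie algebra over $F$, hence abelian or Heisenberg; the abelian one embeds as $\langle\frac{\partial}{\partial x_1},\frac{\partial}{\partial x_2},\frac{\partial}{\partial x_3}\rangle$ and the Heisenberg one as $\langle \frac{\partial}{\partial x_1},\frac{\partial}{\partial x_2},x_2\frac{\partial}{\partial x_1}\rangle\subset u_3(F)$, using $[\frac{\partial}{\partial x_2},x_2\frac{\partial}{\partial x_1}]=\frac{\partial}{\partial x_1}$.

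It remains to verify that the prescribed map is a well defined injective Lie homomorphism, and this is where I expect the only real work. Well-definedness and injectivity amount to the $F$-linear independence of the monomials $a^ib^j$ (resp. $a^i$) in $R$, which I would deduce from the algebraic independence of $a,b$ over $F$: applying $D_2$ and $D_3$ (with $D_2(a)=0$, $D_2(b)=1$, $D_3(a)=1$, $D_3(b)=0$) to a hypothetical polynomial relation and inducting on degree forces all coefficients to vanish. The homomorphism property is checked on generators via Lemma \ref{commutators}; the key point is that the divided-power normalization makes the binomial factors match, e.g.\ $[\frac{a^i}{i!}D_2,\frac{a^kb^l}{k!\,l!}D_1]=\frac{a^{i+k}b^{l-1}}{i!\,k!\,(l-1)!}D_1$ is carried exactly to $[\frac{x_3^i}{i!}\frac{\partial}{\partial x_2},\frac{x_3^kx_2^l}{k!\,l!}\frac{\partial}{\partial x_1}]$. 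The subtle point I would flag is that the set $L_2$ of Theorem \ref{th1} is, as written, not closed under bracket once $n\ge 1$, since such a bracket raises the $a$-degree to $i+k$, possibly exceeding $m$; to avoid this I would first pass to the genuinely closed envelope $V=F\langle \frac{a^ib^j}{i!\,j!}D_1,\ \frac{a^i}{i!}D_2,\ D_3 : i,j\ge 0\rangle$ (a routine check with Lemma \ref{commutators} shows $V$ is a subalgebra), define the map on all of $V$, and then restrict it to the closed subalgebra $FL\subseteq V$. The already closed algebra $L_1$ and the forms arising for $k\le 2$ need no such enlargement.
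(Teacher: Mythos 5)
Your proposal is correct and follows essentially the same route as the paper: the same case split on $k$ and on $\dim_F FL$, the same normal forms from Lemma \ref{nilfewdim} and Theorem \ref{th1}, and the same monomial correspondence sending divided powers of $a,b$ to divided powers of $x_2,x_3$ inside $u_k(F)$. Your additional checks --- injectivity via the algebraic independence of $a,b$ over $F$, and the observation that the span $L_2$ is not bracket-closed for $n\geq 1$ so one should pass to its closed envelope (or just restrict the linear map to the subalgebra $FL$) --- supply details the paper leaves implicit, the latter being a genuine but easily repaired imprecision in the statement of Theorem \ref{th1}.
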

\begin{proof}
If $k=1$ then the Lie algebra $FL$ is one-dimensional over $F$ and therefore is isomorphic to $u_{1}(F)=F\frac{\partial}{\partial x_1}.$ In the case $k=2,$ the Lie algebra $FL$ is (by Lemma \ref{MP-4})  of the form
 $$FL=F\langle D_1, aD_1, \ldots , \frac{a^k}{k!} D_{1}, D_{2}\rangle , \  k\geq 0  $$
 $$\ (\mbox{if} \ k=0, \ \mbox{then put} \ FL=F\langle D_{1}, D_{2}\rangle ),$$ where $[D_1, D_2]=0$, $D_1(a)=0$, $D_2(a)=1.$ The Lie algebra $FL$ is isomorphic to a suitable  subalgebra of the triangular Lie algebra  $u_{2}(F)=\{ f(x_2)\frac{\partial}{\partial x_1}+F\frac{\partial}{\partial x_2}\}:$ the correspondence $ D_{i}\mapsto \frac{\partial}{\partial x_i}, i=1, 2$ and $a \mapsto x_2$ can be extended to an isomorphism between $FL$ and a subalgebra of $u_{2}(F).$ Let now $k=3.$
 If $\dim_{F}FL=3,$ then $FL$ is either abelian or has a basis $D_1, D_2, D_3$ with multiplication rule $[D_3, D_2]=D_1, [D_2, D_1]=[D_3, D_1]=0.$   In the first case, $FL$ is isomorphic to the subalgebra $F\langle \frac{\partial}{\partial x_1}, \frac{\partial}{\partial x_2}, \frac{\partial}{\partial x_3}\rangle$, in the second case it is isomorphic to the subalgebra $F\langle \frac{\partial}{\partial x_1}, x_{3}\frac{\partial}{\partial x_1}+\frac{\partial}{\partial x_2}, \frac{\partial}{\partial x_3}\rangle $ of the triangular Lie algebra $u_{3}(F).$

Let now $\dim _{F}FL\geq 4.$  The Lie algebra $FL$ is contained (by Theorem \ref{th1}) in one of the Lie algebras $L_1$ or $L_2$ from the statement of  that theorem. Note that the Lie algebra $L_1$ is isomorphic to the subalgebra $\overline {L}_1$ of the Lie algebra $u_{3}(F)$ of the form $$\overline L_{1}=F\langle \frac{\partial}{\partial x_3}, \frac{\partial}{\partial x_1},  \ldots, (x_{3}^{n}/n!)\frac{\partial}{\partial x_1}, \frac{\partial}{\partial x_2},  \ldots , (x_{3}^{n}/n!)\frac{\partial}{\partial x_2}\rangle ,$$
Analogously the Lie algebra  $L_2$ is isomorphic the the subalgebra $\overline {L}_2$ of $u_{3}(F)$ of the form
$$\overline L_{2}=F\langle \frac{\partial}{\partial x_3}, \frac{\partial}{\partial x_2},  \ldots, (x_{3}^{n}/n!)\frac{\partial}{\partial x_2}, \{ \frac{x_2^{i}x_3^{j}}{i!j!}\frac{\partial}{\partial x_1}\}, 0\leq i, j\leq m
\rangle .$$
\end{proof}
\begin{corollary}
Let $L$ be a nilpotent subalgebra of the Lie algebra $W_{3}(\mathbb K)={\rm Der} (\mathbb K[x_1, x_2, x_3])$ and $F$ the field of constants for the Lie algebra  $L$ in the field  $\mathbb K(x_, x_2, x_3).$ Then the Lie algebra $FL$ (over the field $F$) is isomorphic to a finite dimensional subalgebra of the triangular Lie algebra $u_{3}(F).$
\end{corollary}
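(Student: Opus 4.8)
The plan is to deduce the corollary from Theorem \ref{th2} by specializing to $A=\mathbb K[x_1,x_2,x_3]$. First I would verify the hypotheses: $A$ is an integral $\mathbb K$-domain (with ${\rm char}\,\mathbb K=0$) whose fraction field is $R=\mathbb K(x_1,x_2,x_3)$, so the setting of Theorem \ref{th1}, and hence of Theorem \ref{th2}, is in force. The module ${\rm Der}_{\mathbb K}A$ is $A$-free on $\partial/\partial x_1,\partial/\partial x_2,\partial/\partial x_3$, whence $W(A)=R\,{\rm Der}_{\mathbb K}A$ coincides with the space $R\,\partial/\partial x_1\oplus R\,\partial/\partial x_2\oplus R\,\partial/\partial x_3$ of all $\mathbb K$-derivations of $R$; in particular $\dim_R W(A)=3$, and $W_3(\mathbb K)={\rm Der}_{\mathbb K}A\subseteq W(A)$.

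Next, let $L$ be a nilpotent subalgebra of $W_3(\mathbb K)$. Viewing $L$ as a subalgebra of $W(A)$, its rank ${\rm rk}_R L=\dim_R RL$ is at most $\dim_R W(A)=3$, since $RL$ is an $R$-subspace of $W(A)$. The field $F$ of the corollary is the field of constants of $L$ in $\mathbb K(x_1,x_2,x_3)$, i.e. exactly $F=R^{L}$. Applying Theorem \ref{th2} with $k={\rm rk}_R L\le 3$ then shows that $FL$, as a Lie algebra over $F$, is isomorphic to a finite-dimensional subalgebra of $u_k(F)$.

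It remains to place $u_k(F)$ inside $u_3(F)$ when $k<3$. For this I would use the chain $u_1(F)\subseteq u_2(F)\subseteq u_3(F)$: the map that keeps the variables $x_1,\ldots,x_k$ and assigns coefficient $0$ to the extra $\partial/\partial x_j$ embeds $u_k(F)$ as a Lie subalgebra of $u_{k+1}(F)$, because a coefficient lying in $F[x_{i+1},\ldots,x_k]$ a fortiori lies in $F[x_{i+1},\ldots,x_{k+1}]$, and the bracket of two such derivations is computed identically in the larger polynomial ring. Composing the inclusions gives $u_k(F)\subseteq u_3(F)$, so $FL$ is isomorphic to a finite-dimensional subalgebra of $u_3(F)$, as required.

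I do not expect a genuine obstacle here: essentially all the work is carried by Theorem \ref{th2}, and the corollary amounts to recognizing the polynomial situation as a special case of the general one. The one point deserving care is the rank bound ${\rm rk}_R L\le 3$, which rests on the rank-$3$ freeness of $W(A)$ over $R$; once that is in hand, the embeddings $u_k(F)\subseteq u_3(F)$ are routine.
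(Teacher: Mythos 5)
Your proposal is correct and follows exactly the route the paper intends: the corollary is stated without proof as an immediate specialization of Theorem \ref{th2} to $A=\mathbb K[x_1,x_2,x_3]$, and your verification of the rank bound ${\rm rk}_R L\le 3$ and the embeddings $u_k(F)\subseteq u_3(F)$ supplies the routine details the paper leaves implicit.
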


\begin{remark}
If $L$ is a nilpotent subalgebra of rank $3$ over $R$ from the Lie algebra $W_{3}(\mathbb K),$ then $L$ being isomorphic to a subalgebra of the triangular Lie algebra $u_{3}(\mathbb K)$  can be not conjugated (by an automorphism of $W_{3}(\mathbb K)$) with any subalgebra of $u_{3}(\mathbb K).$  Indeed, the subalgebra $L=\mathbb K\langle x_1\frac{\partial}{\partial x_1}, x_2\frac{\partial}{\partial x_2}, x_3\frac{\partial}{\partial x_3}\rangle$ is nilpotent but not conjugated with any subalgebra of $u_{3}(\mathbb K)$ ($L$ is selfnormalized in  $W_{3}(\mathbb K)$, but any finite dimensional subalgebra of $u_{3}(\mathbb K)$ is not,  because of locally nilpotency of the Lie algebra $u_{3}(\mathbb K)$).
\end{remark}

The author is grateful to V.~Bavula and V.M.Bondarenko for useful discussions and advice.


%
\end{document}